\newtheorem{theorem}{Theorem}[section]
\newtheorem{proposition}[theorem]{Proposition}
\newtheorem{remark}[theorem]{Remark}
\numberwithin{equation}{section}
\newcounter{minutes}\setcounter{minutes}{\time}
\newcounter{hours}\setcounter{hours}{\time}
\newcommand{\IB}{\mathbb{B}}
\newcommand{\R}{\mathbb{R}}
\newcommand{\Rt}{{\mathbb R}^2}
\newcommand{\Rtbar}{\overline{{\mathbb R}}^2}
\newcommand{\B}{\mathbb{B}}
\newcommand{\Bn}{ {\mathbb{B}^n} }
\newcommand{\Rn}{ {\mathbb{R}^n} }
\newcommand{\beq}{\begin{equation}}
\newcommand{\eeq}{\end{equation}}
\DeclareMathOperator{\diam}{diam}
\begin{document}
\vspace*{-2cm}
\title[Mapping properties of the $\tilde{\tau}$-metric and the $u$-metric]
{Mapping properties of a scale invariant Cassinian metric and a Gromov hyperbolic metric}

%\date{\today}
% Time Stamp%%%%%%%%%%%%%%%%%
\def\thefootnote{}
\footnotetext{ \texttt{\tiny File:~\jobname .tex,
          printed: \number\day-\number\month-\number\year,
          \thehours.\ifnum\theminutes<10{0}\fi\theminutes}
} \makeatletter\def\thefootnote{\@arabic\c@footnote}\makeatother
%%%%%%%%%%%%%%%%%%%%%%%%%%%%%

\author[M. R. Mohapatra]{Manas Ranjan Mohapatra}
\address{Manas Ranjan Mohapatra, Discipline of Mathematics,
Indian Institute of Technology Indore,
Simrol, Khandwa Road, Indore 453 552, India
}
\email{mrm.iiti@gmail.com}

\author[S. K. Sahoo]{Swadesh Kumar Sahoo}
\address{Swadesh Kumar Sahoo, Discipline of Mathematics,
Indian Institute of Technology Indore,
Simrol, Khandwa Road, Indore 453 552, India}
\email{swadesh@iiti.ac.in}

\begin{abstract}
In this paper, we consider a scale invariant Cassinian metric and a Gromov hyperbolic metric.
We discuss a distortion property of the scale invariant Cassinian metric under M\"obius
maps of a punctured ball onto another punctured ball. 
We obtain a modulus of continuity 
of the identity map from a domain equipped with the scale invariant Cassinian metric 
(or the Gromov hyperbolic metric) onto the same domain equipped with the Euclidean metric.
The quasi-invariance properties of both the metrics under quasiconformal maps are  also established.
\\

\smallskip
\noindent
{\bf 2010 Mathematics Subject Classification}. Primary: 51M10; Secondary: 26A15, 30C20, 30C65, 30F45.

\smallskip
\noindent
{\bf Key words and phrases.}
M\"obius map, hyperbolic-type metrics,
the scale invariant Cassinian metric, the Gromov hyperbolic metric,
uniform continuity, quasiconformal map.
\end{abstract}

\maketitle
\thispagestyle{empty}
\section{Introduction and Preliminaries}
An integral part of geometric function theory is to study the behavior of distances
under well-known classes of maps such as M\"obius maps, Lipschitz maps, quasiconformal 
maps, etc. There are some metrics which are M\"obius invariant and some are not. For example, the hyperbolic metric, the 
Apollonian metric \cite{Bea98,Has03,Ibr03} and the Seittenranta metric \cite{Sei99} 
are M\"obius invariant whereas the quasihyperbolic metric \cite{GO79,GP76} and 
the distance ratio metric \cite{Vuo85} are not. These metrics are also
known as the hyperbolic-type metrics in the literature.
Therefore, the study of quasi-invariance or distortion properties of the metrics that are 
not M\"obius invariant attracted many researchers in this field. The quasi-invariance properties 
of such metrics under quasiconformal maps are also of recent interest (see \cite{GO79,HKSV,KVZ14,Sei99}).
Note that the quasihyperbolic and the distance ratio
metrics do satisfy the bilipschitz property with the bilipschitz constant $2$ under M\"obius maps (see \cite[p.~36]{Vuo88}; also see\cite[Corollary~2.5]{GP76} and \cite[Proof of Theorem~4]{GO79}). 
Similar properties have also been studied recently for the Cassinian metric \cite{Ibr09} 
under M\"obius maps of the unit ball and of a punctured ball onto another punctured ball; 
see, for instance, \cite{IMSZ,KMS}.

Unless otherwise stated, we denote by $D$, a proper subdomain of $\Rn$. That is, we write $D\subsetneq \Rn$. 
A scale invariant Cassinian metric, recently introduced
by Ibragimov in \cite{Ibr16} and subsequently studied in \cite{IMS}, 
is defined by
$$\tilde{\tau}_D(x,y)=\log\left(1+\sup_{p\in \partial D}\frac{|x-y|}{\sqrt{|x-p||p-y|}}\right)\quad x,y\in D\subsetneq \Rn.
$$
The comparisons of the $\tilde{\tau}_D$-metric with the hyperbolic-type metrics and their metric ball inclusion 
properties are recently studied in the manuscript \cite{IMS}. The $\tilde\tau$-metric is important due to the following facts. Similar to the Apollonian metric, the (scale-invarint) Cassinian metric is described through ovals of Cassini \cite{Ibr09,Ibr16}.
The  hyperbolic-type metrics through their comparisons shares nice connection 
with the hyperbolic metric to characterize certain domains
such as quasidisks, uniform domains, John domains; see for instance ~\cite{GH00,GO79,KL98}. 
Charactrization of such domains in terms of the Cassinian metric or its scale invariant metric $\tilde\tau$
with other hyperbolic-type metrics are not known though a conjecture is stated in~\cite{Ibr09}.

Recall that the $\tilde{\tau}_D$-metric is M\"obius 
invariant in punctured spaces $\Rn\setminus \{p\}$, $p\in\Rn$ only \cite[Lemma~2.1]{Ibr16}.
Hence it is reasonable to study the quasi-invariance property of the $\tilde{\tau}_D$-metric under M\"obius maps of domains other than the punctured spaces $\Rn\setminus \{p\}$, $p\in\Rn$. In this regard,
we prove a distortion property of the $\tilde{\tau}_D$-metric under M\"obius maps of the punctured ball 
$\Bn\setminus\{0\}$ onto another punctured ball $\Bn\setminus\{a\}$, $0\neq a\in\Bn$. Note that a distortion property of the $\tilde{\tau}$-metric under M\"obius maps of the unit ball $\Bn:=\{x\in \Rn:|x|<1\}$ is recently established in \cite{Ibr16}. Hence, the quasi-invariance property of the $\tilde{\tau}_D$-metric under quasiconformal maps is also natural to study.

On the other hand, in 1987, Gromov introduced the notion of an abstract metric space; see \cite{Gro87}. 
One natural question was to investigate {\em whether a metric space is hyperbolic in the
sense of Gromov or not}? 
Ibragimov in \cite{Ibr11} introduced a metric, $u_Z$, which hyperbolizes the locally compact non-complete metric space $(Z,d)$ without changing its quasiconformal geometry, by 
$$u_Z(x,y)=2\log \frac{d(x,y)+\max\{{\rm dist(x, \partial Z)},{\rm dist(y, \partial Z)}\}}{\sqrt{{\rm dist(x, \partial Z)}\,{\rm dist(y, \partial Z)}}}, \quad x,y\in Z.
$$ 
For a domain $D\subsetneq \Rn$ equipped with the Euclidean metric, the {\em $u_D$-metric} is defined by
$$u_D(x,y)=2\log \frac{|x-y|+\max\{{\rm dist}(x,\partial D),{\rm dist}(y,\partial D)\}}
{\sqrt{{\rm dist}(x,\partial D)\,{\rm dist}(y,\partial D)}}, \quad x,y\in D.
$$
Recall that, as indicated in \cite{IMS}, the $u_D$-metric does not satisfy the domain monotonicity property and
it coincides with the distance ratio metric in punctured spaces $\Rn\setminus\{p\}$, for $p\in\Rn$. 
It is appropriate to recall here that Gromov hyperbolicity is preserved under rough quasi-isometries \cite[Theorem~3.17]{Vai05}; see also \cite{Has05}. This motivates us to study the $u_D$-metric in the
setting of hyperbolic-type metrics associated with the quasiconformal mappings.
Though the Gromov hyperbolic metric $u_D$ is compared with some of the hyperbolic-type metrics~\cite{IMS,Ibr11}, 
domain characterizations in terms of the $u_D$-metric are still open.

Secondly, the concept of uniform continuity is vastly used in metric spaces. The importance and applications of uniform continuity can be seen in many areas of 
mathematics and physics (see for instance, \cite{Kumaresan-book,Rudin-book}). 
A {\em modulus of continuity}\index{modulus of continuity} is a function $\omega:[0,\infty]\to [0,\infty]$ 
used to measure quantitatively the uniform continuity of functions.
Let $(X_j,d_j)$, $j=1,2$, be metric spaces. A function $f:\, X_1\to
X_2$ admits $\omega$ as {\em modulus of continuity} if and only if 
for all $x,y\in X_1$, $d_2(f(x),f(y))\le \omega(d_1(x,y))$. 
We also call such functions as {\em uniform continuous with modulus of continuity
$\omega$} (or {\em $\omega$-uniformly continuous}).
For instance, for $k>0$, the modulus $\omega(t)=kt$ describes the $k$-Lipschitz
functions, the moduli $\omega(t)=kt^{\alpha}$, $\alpha>0$, describe the H\"older
continuity, and so on. To simplify matters in this topic, we always assume
that $\omega(t)$ is an increasing homeomorphism.

Here we will mainly be motivated by geometric function theory and
therefore give a few related examples. 
Recall that the hyperbolic metric, $\rho_\Bn$, of the unit ball 
$\Bn$ is given by
$$
\rho_{\Bn}(x,y)=\inf_\gamma\int_\gamma \frac{2|{\rm d}z|}{1-|z|^2},
$$
where the infimum is taken over all rectifiable curves $\gamma\subset \IB^n$
joining $x$ and $y$.
If $X_1=\Bn=X_2$ and
$f:\,\Bn\to \Bn$ is quasiconformal, then the quasiconformal
counterpart of the Schwarz lemma says that $f:\, (\Bn,\rho_{\Bn})\to
(\Bn,\rho_{\Bn})$ is uniformly continuous.
If $X_1=\B^2$, $X_2=\Rt\setminus\{0,1\}$, the
Schottky theorem gives, in an explicit form, a growth estimate for
$|f(z)|$ in terms of $|z|$ when $f:\,\B^2\to \Rt\setminus\{0,1\}$
is an analytic function \cite[p. 685, 702]{Hay89}. In fact, Nevanlinna's
principle of the hyperbolic metric \cite[p. 683]{Hay89} yields an
estimate for the modulus of continuity of $f:\,(\B^2,\rho_{\B^2})\to
(X_2,d_2)$ where $d_2$ is the hyperbolic metric of the twice punctured
plane $X_2$. If $q$ is the chordal metric and
$f:\,(\B^2,\rho_{\B^2})\to (\Rtbar,q)$ is a meromorphic function,
then $f$ is normal (in the sense of Lehto and Virtanen \cite{LV73}) if and
only if it is uniformly continuous.  In the context of quasiregular
maps, uniform continuity has been discussed in \cite{Vuo85,Vu07}.
Uniform continuity of mappings with respect to the distance ratio metric and the quasihyperbolic metric in the unit ball has been discussed in \cite{KLV13}. 
In this connection, we consider the 
identity map $id\,:(\Bn,m_\Bn)\to (\Bn,|\cdot|)$ and prove that it is uniformly continuous, where $m_{\Bn}\in \{\tilde{\tau}_{\Bn},u_{\Bn}\}$. We also prove that the identity map $id\,:(D,\tilde{\tau}_D)\to (D,|\cdot|)$
is uniformly continuous, where $D\subsetneq \Rn$ is bounded.

Using the bilipschitz relation between the $u_D$-metric and the $\tilde{\tau}_D$-metric 
discussed in \cite[Theorem~3.5]{IMS}, 
we finally study the quasi-invariance property of these two metrics under quasiconformal maps of $\Rn$. 
Such problems are important in this context, as we already stated that
there are some metrics which are invariant under certain classes of mappings and some are not.  
If a metric is not invariant under certain classes of mappings, then it is reasonable to study 
its quasi-invariance property (also called the distortion property).
In particular, distortion properties attracted many researchers  
to prove several classical theorems in univalent function theory of the one and several 
complex variables; see \cite{Dur83,GK03}. Practically, distortion properties are also used
in converting a sphere to a flat surface when it is projected through mappings and the distortion constant
tells us how much it got stretched after it is projected. 
In general, four special properties (shape, area, distance,
and direction) subject to distortion are attracted many geometers. In this paper, one of our main objectives is 
to deal with distance properties subject to distortion. A surprising fact, in this context, is that though some metrics are
not invariant under quasiconformal mappings, domains that are characterized through such metric inequalities are invariant
under quasiconformal mappings of $\Rn$; see for instance~\cite{GO79}, that is, geometric properties or the definition of image of such domains remain unchanged.

\section{Distortion of the $\tilde{\tau}$-metric under M\"obius maps of a punctured ball}
Our objective in this section is to study the distortion property of the $\tilde{\tau}$- metric under M\"obius maps from a punctured ball onto another punctured ball.
Recall that distortion properties of the $\tilde{\tau}_{\Bn}$-metric under M\"obius maps of the unit ball $\Bn$ has recently been studied in \cite{Ibr16}.

\begin{theorem}\label{lip-bn}
Let $a\in \mathbb{B}^n$ and $f:\mathbb{B}^n\setminus\{0\} \to \mathbb{B}^n\setminus\{a\}$ 
be a M\"obius map with $f(0)=a$. Then for $x,y\in \mathbb{B}^n\setminus\{0\}$ we have
$$ \frac{1-|a|}{1+|a|} \tilde{\tau}_{\mathbb{B}^n\setminus \{0\}}(x,y) \le \tilde{\tau}_{\mathbb{B}^n\setminus\{a\}}(f(x),f(y))\le \frac{1+|a|}{1-|a|} \tilde{\tau}_{\mathbb{B}^n\setminus\{0\}}(x,y).
$$
\end{theorem}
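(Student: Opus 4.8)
The plan is to reduce the theorem to a two-sided estimate for the ``Cassinian supremum'' sitting inside the logarithm, and then to transport that estimate through the M\"obius map using the conformal factor of $f$. Write
$$S_1 = \sup_{p\in\partial(\mathbb{B}^n\setminus\{0\})}\frac{|x-y|}{\sqrt{|x-p|\,|p-y|}}, \qquad S_2 = \sup_{q\in\partial(\mathbb{B}^n\setminus\{a\})}\frac{|f(x)-f(y)|}{\sqrt{|f(x)-q|\,|q-f(y)|}},$$
so that $\tilde{\tau}_{\mathbb{B}^n\setminus\{0\}}(x,y)=\log(1+S_1)$ and $\tilde{\tau}_{\mathbb{B}^n\setminus\{a\}}(f(x),f(y))=\log(1+S_2)$. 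Put $m=\tfrac{1-|a|}{1+|a|}\le 1$ and $M=\tfrac{1+|a|}{1-|a|}\ge 1$. It suffices to prove $mS_1\le S_2\le MS_1$: granting this, Bernoulli's inequality gives $(1+S_1)^M\ge 1+MS_1\ge 1+S_2$ and $(1+S_1)^m\le 1+mS_1\le 1+S_2$ (using $M\ge 1$, $0<m\le 1$), and taking logarithms yields exactly the two displayed inequalities. This reduction is routine once the estimate on $S_1,S_2$ is in hand.

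The heart of the matter is therefore the comparison $mS_1\le S_2\le MS_1$. Since $f$ is a M\"obius automorphism of $\mathbb{B}^n$ that fixes the sphere $S^{n-1}$ setwise and sends $0$ to $a$, it carries $\partial(\mathbb{B}^n\setminus\{0\})=S^{n-1}\cup\{0\}$ bijectively onto $\partial(\mathbb{B}^n\setminus\{a\})=S^{n-1}\cup\{a\}$, so I reparametrize the supremum in $S_2$ by $q=f(p)$ with $p\in S^{n-1}\cup\{0\}$. Now I invoke the basic metric identity for M\"obius maps, $|f(u)-f(v)|=\sqrt{|f'(u)|\,|f'(v)|}\,|u-v|$, valid for all $u,v$, where $|f'|$ denotes the conformal scaling factor. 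Substituting it into the numerator and the two factors of the denominator and simplifying, each term defining $S_2$ becomes
$$\frac{|f(x)-f(y)|}{\sqrt{|f(x)-f(p)|\,|f(p)-f(y)|}} = \frac{(|f'(x)|\,|f'(y)|)^{1/4}}{|f'(p)|^{1/2}}\cdot\frac{|x-y|}{\sqrt{|x-p|\,|p-y|}}.$$
Thus $S_2=\sup_p \lambda(p)\,g(p)$, where $g(p)=\frac{|x-y|}{\sqrt{|x-p|\,|p-y|}}$ is the term defining $S_1$ and $\lambda(p)=(|f'(x)|\,|f'(y)|)^{1/4}|f'(p)|^{-1/2}$ is a distortion factor.

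It then remains to bound $\lambda(p)$ between $m$ and $M$ uniformly for $x,y\in\mathbb{B}^n$ and $p\in S^{n-1}\cup\{0\}$; once $m\le\lambda(p)\le M$ is established, monotonicity of the supremum gives $mS_1=m\sup_p g(p)\le \sup_p\lambda(p)g(p)=S_2\le M\sup_p g(p)=MS_1$, as required. The estimate on $\lambda$ rests on the key lemma that, for a M\"obius automorphism $f$ of $\mathbb{B}^n$ with $f(0)=a$, the conformal factor satisfies $\tfrac{1-|a|}{1+|a|}\le|f'(z)|\le\tfrac{1+|a|}{1-|a|}$ for every $z\in\overline{\mathbb{B}^n}$. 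Granting this, $|f'(x)|,|f'(y)|\le M$ and $|f'(p)|\ge m$ give $\lambda(p)\le (M^2)^{1/4}m^{-1/2}=(M/m)^{1/2}=M$, while $|f'(x)|,|f'(y)|\ge m$ and $|f'(p)|\le M$ give $\lambda(p)\ge (m/M)^{1/2}=m$, since $M/m=\big(\tfrac{1+|a|}{1-|a|}\big)^2$.

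I expect the main obstacle to be this key lemma: the uniform two-sided control of $|f'|$ over the \emph{closed} ball. It can be obtained from the conformal-factor identity $|f'(z)|=(1-|f(z)|^2)/(1-|z|^2)$ combined with the explicit normal form of a ball automorphism with $f(0)=a$; the extremal values $\tfrac{1\pm|a|}{1\mp|a|}$ are attained on the sphere, and the boundary points $p\in S^{n-1}$, where the naive quotient is a $0/0$ limit, must be treated through this explicit form rather than the quotient directly. The case $p=0$, where $|f'(0)|=1-|a|^2$, is immediate since $1-|a|^2$ visibly lies in $[m,M]$. Alternatively, the derivative bound can be quoted from the distortion analysis of $\tilde{\tau}_{\mathbb{B}^n}$ under M\"obius maps of the unit ball in \cite{Ibr16}.
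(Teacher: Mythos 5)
Your proposal is correct, and it reorganizes the argument in a genuinely different way from the paper. The paper writes $f=\sigma\circ A$ with $\sigma$ the inversion in the sphere $\mathbb S^{n-1}(a^\star,r)$ orthogonal to $\mathbb S^{n-1}$ and $A$ orthogonal, derives the identity $|f(x)-f(y)|=\frac{|f(x)-a^\star||f(y)-a^\star|}{|a^\star|^2-1}\,|x-y|$ together with the bound $|a^\star|-1\le|f(z)-a^\star|\le|a^\star|+1$, and then splits into two cases according to whether the extremal boundary point for $\tilde{\tau}_{\mathbb{B}^n\setminus\{a\}}(f(x),f(y))$ is the puncture $a$ or a point of $\mathbb S^{n-1}$; the puncture case is computed explicitly, the sphere case is outsourced to Theorem~7.1 of \cite{Ibr16}, and the lower bound is obtained ``by taking the inverse mapping.'' You instead reparametrize the entire supremum by $q=f(p)$ and treat all boundary points uniformly through the conformal-factor identity $|f(u)-f(v)|=\sqrt{|f'(u)||f'(v)|}\,|u-v|$ and the two-sided bound $\frac{1-|a|}{1+|a|}\le|f'(z)|\le\frac{1+|a|}{1-|a|}$ on $\overline{\mathbb{B}^n}$. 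These ingredients are in substance the paper's own, since $|f'(z)|=|f(z)-a^\star|^2/(|a^\star|^2-1)$, so your ``key lemma'' is exactly the paper's estimate on $|f(z)-a^\star|$ in disguise and is proved by the route you indicate; but your packaging eliminates the case distinction, makes the proof self-contained (no appeal to the external Theorem~7.1 for the sphere case), and produces the lower bound symmetrically from the same pointwise estimate via the $0<m\le1$ form of Bernoulli's inequality rather than by passing to $f^{-1}$. The exponent bookkeeping $\lambda(p)=(|f'(x)||f'(y)|)^{1/4}|f'(p)|^{-1/2}\in[m,M]$ checks out because $M/m=M^2$, and the reduction from $mS_1\le S_2\le MS_1$ to the logarithmic inequalities is valid.
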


\begin{proof}
If $a=0$, the proof is given in \cite{Ibr16}. 
Now, assume that $a\neq 0$. Let $\sigma$ be the inversion in the sphere $\mathbb S^{n-1}(a^\star,r)$, where 
$$
a^\star=\frac {a}{|a|^2}\qquad\text{and}\qquad r=\sqrt{|a^\star|^2-1}=\frac {\sqrt{1-|a|^2}}{|a|}.
$$
Note that the sphere $\mathbb S^{n-1}(a^\star,r)$ is orthogonal to $\mathbb{S}^{n-1}$ and that $\sigma(a)=0$. 
In particular, $\sigma$ is a M\"obius map with $\sigma(\IB^n\setminus\{a\})=\IB^n\setminus\{0\}$. 
Recall from \cite{Bea95} that 
\begin{equation}\label{inversionmap}
\sigma(x)=a^\star+\Big(\frac {r}{|x-a^\star|}\Big)^2\big(x-a^\star\big).
\end{equation}
Then $\sigma\circ f$ is an orthogonal matrix (see, for example, \cite[Theorem 3.5.1(i)]{Bea95}). 
In particular,  
\begin{equation}\label{mob1}
\Big|\sigma\big(f(x)\big)-\sigma\big(f(y)\big)\Big|=|x-y|.
\end{equation}
We will need the following property of $\sigma$ (see, for example, \cite[p. 26]{Bea95}):
\begin{equation}\label{mob2}
|\sigma(x)-\sigma(y)|=\frac {r^2|x-y|}{|x-a^\star||y-a^\star|}.
\end{equation}
It follows from (\ref{mob1}) and (\ref{mob2}) that
\begin{equation}\label{mob3}
|f(x)-f(y)|=\frac{|f(x)-a^\star||f(y)-a^\star|}{|a^\star|^2-1}|x-y|.
\end{equation}
Note that since $|f(z)|\le 1$ whenever $|z|\le 1$ and since $|a^\star|>1$, we have
$$ |a^\star|-1 \le |f(z)-a^\star|\le |a^\star|+1.
$$
Denote by $P=\min\{\sqrt{|f(x)-a||f(y)-a|}, \inf_{p\in \partial \Bn} \sqrt{|f(x)-p||f(y)-p|}\}$. 
Now, from the definition it follows that
$$\tilde{\tau}_{\Bn\setminus\{a\}}(f(x),f(y))=\log \left(1+\frac{|f(x)-f(y)|}{P}\right),
$$
and
$$\tilde{\tau}_{\Bn\setminus\{0\}} (x,y)=\log \left(1+\frac{|x-y|}{\min\{\sqrt{|x||y|},\inf_{z\in \partial \Bn}\sqrt{|x-z||y-z|}\}}\right).
$$
Here we have two choices for $P$.

\noindent{\bf Case I.}  $P=\sqrt{|f(x)-a||f(y)-a|}$

From (\ref{mob3}), it is clear that
$$|f(x)-a|=\frac{|f(x)-a^\star||a-a^\star|}{|a^\star|^2-1}|x|\quad\mbox{and } |f(y)-a|=\frac{|f(y)-a^\star||a-a^\star|}{|a^\star|^2-1}|y|.
$$
Now,
\begin{align*}
\tilde{\tau}_{\Bn\setminus\{a\}}(f(x),f(y)) &= \log \left(1+\frac{|f(x)-f(y)|}{\sqrt{|f(x)-a||f(y)-a|}}\right) \nonumber \\
&\le  \log \left(1+ \frac{1+|a|}{1-|a|} \frac{|x-y|}{\sqrt{|x||y|}} \right)\\
&\le  \frac{1+|a|}{1-|a|} \log \left(1+ \frac{|x-y|}{\sqrt{|x||y|}} \right)\le 
\frac{1+|a|}{1-|a|} \tilde{\tau}_{\Bn\setminus \{0\}}(x,y),
\end{align*}
where the second inequality follows from the well-known Bernoulli's inequality
\begin{equation}\label{Bernoulli-a>1}
\log(1+ax)\le a\log(1+x)\quad \mbox{ for } a\ge 1, x>0.
\end{equation}
Similarly, by taking the inverse mapping, we can prove that 
$$\frac{1-|a|}{1+|a|}\tilde{\tau}_{\Bn\setminus\{0\}}(x,y)\le \tilde{\tau}_{\Bn\setminus\{a\}}(f(x),f(y)).
$$

\noindent{\bf Case II.}  $P=\inf_{p\in \partial \Bn} \sqrt{|f(x)-p||f(y)-p|}$

This case follows from the proof of Theorem~7.1 in \cite{Ibr16}. This completes the proof of our theorem.
\end{proof}

\section{Uniform Continuity}

We begin this section with the following proposition which obtains the formula for the
$\tilde{\tau}_{\Bn}$-metric in the special cases when $x=ty$ for real $t\neq 0$. We observe that 
for $t>0$ the points $x$ and $y$ lie on a radial segment whereas for $t<0$ they are diametrically opposite. Assuming without of generality that $|x|\le |y|$
we have

\begin{proposition}\label{formula-bn}
Let $x,y\in \Bn$ with $x=ty,0\neq t\in \R$, and $|x|\le |y|$. Then we have the followings:
\begin{enumerate}
\item\label{formula-bn-1} If $t>0$, then 
$$\tilde{\tau}_{\Bn}(x,y)=\log\left(1+\frac{|x-y|}{\sqrt{(1-|x|)(1-|y|)}}\right), 
$$
\item\label{formula-bn-3} If $t<0$, then 
$$\tilde{\tau}_{\Bn}(x,y)=\log\left(1+\frac{|x-y|}{\sqrt{(1+|x|)(1-|y|)}}\right), 
$$
\end{enumerate}
\end{proposition}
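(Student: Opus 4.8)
The plan is to exploit rotational invariance in order to reduce the supremum in the definition of $\tilde{\tau}_{\Bn}$ to a one-variable optimization. Since $\partial\Bn=\mathbb{S}^{n-1}$, maximizing $|x-y|/\sqrt{|x-p|\,|p-y|}$ over $p\in\partial\Bn$ is the same as minimizing the product $|x-p|\,|p-y|$ over $p\in\mathbb{S}^{n-1}$. After an orthogonal change of coordinates I may assume $y=|y|e_1$; then $x=|x|e_1$ when $t>0$ and $x=-|x|e_1$ when $t<0$. Writing $u=p\cdot e_1\in[-1,1]$ and using $|p|=1$, the two squared distances become
\begin{equation*}
|y-p|^2=1-2|y|u+|y|^2,\qquad |x-p|^2=1\mp 2|x|u+|x|^2,
\end{equation*}
the upper sign applying when $t>0$ and the lower when $t<0$. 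Thus the quantity to be minimized depends on $p$ only through $u$, and the whole problem collapses to minimizing a product of two quadratics on $[-1,1]$.

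First I would treat part \eqref{formula-bn-1}, the case $t>0$. Here both factors $1-2|x|u+|x|^2$ and $1-2|y|u+|y|^2$ are positive and strictly decreasing in $u$ (each derivative is a negative constant), so their product is decreasing and attains its minimum at the endpoint $u=1$, that is, at the boundary point $p=e_1$. Evaluating there gives $|x-p|\,|p-y|=(1-|x|)(1-|y|)$, which upon substitution into the definition of $\tilde{\tau}_{\Bn}$ yields the claimed formula.

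The case $t<0$ in part \eqref{formula-bn-3} is the main obstacle, since now one factor increases and the other decreases in $u$, so the minimizer is no longer obvious from monotonicity. The key observation I would use is that the product $h(u)=(1+2|x|u+|x|^2)(1-2|y|u+|y|^2)$ has negative leading coefficient $-4|x|\,|y|$ and is therefore concave on $[-1,1]$; consequently its minimum over the interval is attained at an endpoint, either $u=1$ or $u=-1$. Evaluating gives $h(1)=(1+|x|)^2(1-|y|)^2$ and $h(-1)=(1-|x|)^2(1+|y|)^2$, and a short computation shows $(1+|x|)(1-|y|)-(1-|x|)(1+|y|)=2(|x|-|y|)\le 0$ precisely because $|x|\le|y|$. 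Hence the minimum is $h(1)=(1+|x|)^2(1-|y|)^2$, so that $\sqrt{\min_{p}|x-p|\,|p-y|}=\sqrt{(1+|x|)(1-|y|)}$, and substituting into the definition produces the second formula. The only place the hypothesis $|x|\le|y|$ is genuinely needed is in this final endpoint comparison: it is exactly what selects $u=1$ over $u=-1$.
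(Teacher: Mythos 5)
Your proof is correct, and it takes a genuinely different route from the paper's. The paper argues geometrically: it asserts that the maximal Cassinian oval with foci $x,y$ touches $\partial\Bn$ at the boundary point in the direction of the radial segment (for $t>0$), respectively at the boundary point nearest to $y$ (for $t<0$), and reads off $|x-p||p-y|$ there; the hypothesis $|x|\le|y|$ enters only implicitly in the phrase ``nearest boundary point close to $y$.'' You instead reduce, via rotation, to minimizing the product of two quadratics in $u=p\cdot e_1$ on $[-1,1]$, and identify the minimizer by monotonicity (for $t>0$) or by concavity of the quadratic $h$ plus an explicit endpoint comparison (for $t<0$). What your approach buys is a complete verification of exactly the point the paper leaves to geometric intuition — in particular, the computation $(1+|x|)(1-|y|)-(1-|x|)(1+|y|)=2(|x|-|y|)\le 0$ makes transparent where and why $|x|\le|y|$ is needed, and the argument works uniformly in all dimensions $n\ge 1$. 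The only (harmless) omission is the degenerate case $y=0$, which forces $x=0$ and makes both sides vanish.
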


\begin{proof}
The proof easily follows from the definition of the $\tilde\tau$-metric. Indeed, for the proof of (1), 
the maximal Cassinian oval touches the nearest boundary point $p$ of $\Bn$ on the direction of the radial segment. 
It follows that $|x-p||y-p|=(1-|x|)(1-|y|)$. Hence we obtain the desired formula.

Secondly, for the proof of (2), the maximal Cassinian oval touches the nearest boundary point $p$ close to $y$.
This yields $|x-p||y-p|=(1+|x|)(1-|y|)$. Now, the required formula follows from the definition of the $\tilde\tau$-metric.
\end{proof}

We now discuss the uniform continuity of the 
identity map $id:(D,m_D)\to (D,|.|)$, where $m_D$ is either the $u_D$-metric or the $\tilde{\tau}_D$-metric.
More precisely, first we consider a problem to find a bound, as sharp as possible, for the
modulus of continuity of the identity map
\begin{equation}\label{unif-cont2}
id:\, (D,\tilde{\tau}_{D})\to (D,|\cdot|\,),
\end{equation}
where $D\subsetneq \Rn$ is a bounded domain.
Secondly, we investigate such problem for the $u_D$-metric.

First, we obtain the modulus of continuity of the $id$ map \eqref{unif-cont2} when
$D=\Bn$.

\begin{theorem}\label{Jung-appl}
If $x,y\in \Bn$ are arbitrary and $w=|x-y|\,e_1/2$, then
$$\tilde{\tau}_{\Bn}(x,y)\ge \tilde{\tau}_{\Bn}(-w,w)=\log\left(1+\frac{2|x-y|}{\sqrt{4-|x-y|^2}}\right)
\ge c\,|x-y|\,,
$$
where $c\,(\approx 0.76)$ is the solution of the equation 
$$(4-t^2)(2t+\sqrt{4-t^2})\log\left(1+\frac{2t}{\sqrt{4-t^2}}\right)-8t=0.
$$
The first inequality becomes equality when $y=-x$.
\end{theorem}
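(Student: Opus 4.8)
The plan is to handle the central equality and the two inequalities separately: the equality is a direct consequence of Proposition~\ref{formula-bn}, the first inequality needs a geometric extremal argument, and the second is elementary one‑variable calculus. Since $-w=(-1)\,w$ with $|-w|=|w|=|x-y|/2$, the pair $\{-w,w\}$ is a diametrically opposite pair, so Proposition~\ref{formula-bn}(\ref{formula-bn-3}) applies with both radii equal to $|x-y|/2$ and yields at once
\[
\tilde{\tau}_{\Bn}(-w,w)=\log\left(1+\frac{|x-y|}{\sqrt{(1+|x-y|/2)(1-|x-y|/2)}}\right)=\log\left(1+\frac{2|x-y|}{\sqrt{4-|x-y|^2}}\right),
\]
which is the middle term. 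Writing $d=|x-y|$ and using $\tilde{\tau}_{\Bn}(x,y)=\log\bigl(1+d/\inf_{p\in\partial\Bn}\sqrt{|x-p||y-p|}\bigr)$, the first inequality is equivalent, since $\log(1+\cdot)$ is increasing, to the purely geometric claim $\inf_{p\in\partial\Bn}|x-p||y-p|\le 1-d^2/4$.

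To prove this claim I would not locate the minimizing boundary point, but merely test the infimum against the two points $q_1,q_2$ at which the line through $x$ and $y$ meets $\partial\Bn$, labelled so that the order along the line is $q_1,x,y,q_2$. Put $a=|x-q_1|$ and $b=|y-q_2|$, so that $|y-q_1|=a+d$, $|x-q_2|=b+d$, and the chord length is $|q_1-q_2|=a+d+b\le 2$. If both tested values exceeded the target, i.e.\ $a(a+d)>1-d^2/4$ and $b(b+d)>1-d^2/4$, then completing the square gives $(a+d/2)^2>1$ and $(b+d/2)^2>1$, hence $a>1-d/2$ and $b>1-d/2$, whence $a+d+b>2$, contradicting the chord bound. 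Therefore $\min\{a(a+d),b(b+d)\}\le 1-d^2/4$, and a fortiori the infimum over all of $\partial\Bn$ is $\le 1-d^2/4$, which proves the first inequality. For the equality statement, when $y=-x$ the chord is a diameter, $a=b=1-d/2$, both tested values equal $1-d^2/4$, and a direct minimization of $|p-w||p+w|=\bigl((1+|w|^2)^2-4|w|^2p_1^2\bigr)^{1/2}$ over $p\in\partial\Bn$ (minimal at $p_1^2=1$) confirms the infimum is exactly $1-d^2/4$, so equality holds.

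For the second inequality I would set $g(d)=\log\bigl(1+2d/\sqrt{4-d^2}\bigr)$ on $(0,2)$ and minimize the slope $h(d)=g(d)/d$. Differentiating gives $g'(d)=\frac{8}{(4-d^2)(2d+\sqrt{4-d^2})}$, so the critical points of $h$ are the zeros of $d\,g'(d)-g(d)$; clearing denominators turns $d\,g'(d)=g(d)$ into exactly the displayed equation $(4-t^2)(2t+\sqrt{4-t^2})\log\bigl(1+2t/\sqrt{4-t^2}\bigr)-8t=0$. Since $h(d)\to 1$ as $d\to0^{+}$ and $h(d)\to\infty$ as $d\to2^{-}$, and one verifies that $h$ has a single interior critical point, that point is the global minimizer and the minimal slope is the asserted constant $c\approx0.76$; thus $g(d)\ge c\,d$ for every $d\in(0,2)$, which is the second inequality.

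The only genuinely delicate step is the first inequality, namely recognizing that among all pairs at fixed Euclidean separation the diametric pair centred at the origin—the Jung‑type optimal placement, with the two points sitting as deep inside $\Bn$ as possible—is the $\tilde{\tau}$‑minimizer; the chord test above makes this rigorous without any case analysis on the location of the extremal boundary point. The remaining ingredients, the evaluation via Proposition~\ref{formula-bn} and the one‑variable optimization, are routine, the latter requiring only the verification that $h$ has a single interior critical point.
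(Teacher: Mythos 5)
Your proposal is correct, and for the crucial first inequality it takes a genuinely different route from the paper. The paper argues in two stages: it first reduces arbitrary $x,y$ to a collinear configuration (replacing $y$ by a point $y'$ on the line through $x$ and the origin with $|x-y'|=|x-y|$), and then, for collinear points, invokes Proposition~\ref{formula-bn} together with the geometric assertion that the maximal Cassinian oval with foci $x,y$ lies inside the one with foci $\pm w$, which it verifies by checking $\sqrt{(1+|x|)(1-|y|)}\le\sqrt{1-|x-y|^2/4}$. You instead reduce the first inequality to the single scalar claim $\inf_{p\in\partial\Bn}|x-p||y-p|\le 1-d^2/4$ and prove it by testing only the two endpoints $q_1,q_2$ of the chord through $x$ and $y$: if both tested products exceeded $1-d^2/4$, completing the square would force $a,b>1-d/2$ and hence a chord of length $a+d+b>2$, a contradiction. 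This is a clean pigeonhole argument that handles arbitrary (non-collinear) $x,y$ in one stroke, avoids both the paper's rotation reduction of Case~II and its ``geometrically, it can easily be seen'' step, and is arguably more rigorous and self-contained; the paper's version is more visual and makes the extremal configuration explicit. Your treatment of the middle equality (via Proposition~\ref{formula-bn}(2) applied to $\pm w$) and of the second inequality (minimizing $g(d)/d$, with the critical-point equation matching the displayed one) coincides with the paper's, and your remaining unverified step --- that $h$ has a single interior critical point --- is at the same level of informality as the paper's own ``by the derivative test, it can be seen.''
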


\begin{proof}
Let $x,y\in \Bn$ with $|x|\le |y|$. Here we consider two cases.

\noindent{\bf Case I.} Suppose that $x$ and $y$ are lying on a diameter of $\Bn$ with $0\in [x,y]$.
Then it follows from Proposition~\ref{formula-bn}\eqref{formula-bn-3} that
$$\tilde{\tau}_{\Bn}(x,y)=\log\left(1+\frac{|x-y|}{\sqrt{(1+|x|)(1-|y|)}}\right)
$$
and hence
$$
\tilde{\tau}_{\Bn}(-w,w)=\log\left(1+\frac{2|w|}{\sqrt{1-|w|^2}}\right)=\log\left(1+\frac{2|x-y|}{\sqrt{4-|x-y|^2}}\right).
$$
With a suitable rotation and translation, geometrically, it can easily be seen that
the maximal Cassinian oval with foci at $x$ and $y$ will lie inside the maximal Cassinian oval with foci at $-w$ and $w$. Analytically, we say
$$\inf_{p\in \partial \Bn} \sqrt{|x-p||p-y|}\le \inf_{p\in \partial \Bn} \sqrt{|w+p||p-w|}.
$$
i.e.
$$\sqrt{(1+|x|)(1-|y|)}\le \sqrt{1-\frac{|x-y|^2}{4}},
$$
which is true.
Hence, $\tilde{\tau}_{\Bn}(x,y)\ge \tilde{\tau}_{\Bn}(-w,w)$.

If $x\in[0,y]$, then by Proposition~\ref{formula-bn}\eqref{formula-bn-1} we have
$$\tilde{\tau}_{\Bn}(x,y)=\log\left(1+\frac{|x-y|}{\sqrt{(1+|x|)(1-|y|)}}\right)
$$
and the proof follows similarly.

For the second inequality, we need to find the minimum of the function
$$\frac{1}{t} \log\left(1+\frac{2t}{\sqrt{4-t^2}}\right)\quad (t=|x-y|).
$$
By the derivative test, it can be seen that the minimum attains at the point $t\approx 1.16$ and the minimum value is approximately $0.76$.

\noindent{\bf Case II.} Let $x,y\in \Bn$ be arbitrary. Choose $y'\in \Bn$ such that $|x-y|=|x-y'|$ with $x$ and
$y'$ on a diameter of $\Bn$. With the same argument as we did in {\bf Case I}, we can show that
$$\tilde{\tau}_{\Bn}(x,y)\ge \tilde{\tau}_{\Bn}(x,y')\ge \tilde{\tau}_{\Bn}(-w,w).
$$
The proof is complete.
\end{proof}

Now, we obtain the modulus of continuity of the $id$ map \eqref{unif-cont2} when $D$ is a bounded proper subdomain of $\Rn$.

\begin{theorem}\label{gen-unif}
Let $D\subsetneq \Rn$ be a domain with
${\rm diam}\,D < \infty$ and $r=\sqrt{n/(2n+2)}\,{\diam}\, D$.
Then we have 
$$\tilde{\tau}_D(x,y)\ge \log\left(1+\frac{2|x-y|}{\sqrt{4r^2-|x-y|^2}}\right)\ge c\, \frac{|x-y|}{r}
$$
for all distinct $x,y\in D$ with equality in the first step
when $D=B^n(z,r)$ and $z=(x+y)/2$. Here $c$ is the same number as defined in {\rm Theorem}~$\ref{Jung-appl}$.
\end{theorem}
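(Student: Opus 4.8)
The plan is to trap $D$ inside a Euclidean ball of exactly the stated radius $r$ by Jung's theorem, pass from $D$ to that ball using the domain monotonicity of $\tilde{\tau}$, and finish with the scale invariance of $\tilde{\tau}$ together with the unit-ball estimate of Theorem~\ref{Jung-appl}.

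First I would invoke Jung's theorem: a set of diameter $d=\diam D<\infty$ in $\Rn$ is contained in a closed ball of radius $r=\sqrt{n/(2n+2)}\,d$. Fix such a ball $B^n(z_0,r)$, so $\overline D\subseteq\overline{B^n(z_0,r)}$; since $D$ is open no point of $D$ can lie on the bounding sphere, hence $D\subseteq B^n(z_0,r)$ and in particular $x,y\in B^n(z_0,r)$. Because $\tilde{\tau}$ is invariant under similarities, the map $p\mapsto(p-z_0)/r$ carries $B^n(z_0,r)$ onto $\Bn$, and Theorem~\ref{Jung-appl}, applied to the images and rescaled, gives
\[
\tilde{\tau}_{B^n(z_0,r)}(x,y)\ge\log\left(1+\frac{2|x-y|}{\sqrt{4r^2-|x-y|^2}}\right);
\]
this is meaningful since $|x-y|\le\diam D=d\le 2r$ (indeed $2r=\sqrt{2n/(n+1)}\,d\ge d$ for $n\ge1$).

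The heart of the argument, and the step I expect to be the main obstacle, is the comparison $\tilde{\tau}_D(x,y)\ge\tilde{\tau}_{B^n(z_0,r)}(x,y)$, equivalently the domain monotonicity $\inf_{p\in\partial D}g(p)\le\inf_{q\in\partial B^n(z_0,r)}g(q)$ for $g(p)=|x-p|\,|p-y|$. I would prove it by contradiction through the Cassinian sublevel sets of $g$. Put $c^2=\inf_{q\in\partial B^n(z_0,r)}g(q)$, attained at some $q^\ast$ on the compact sphere, and suppose $\inf_{\partial D}g>c^2$, so that $g>c^2$ on $\partial D$. The only minima of $g$ are the foci $x,y$, the sole additional critical point being a saddle of value $|x-y|^2/4$ on the perpendicular bisector; consequently every connected component of $\{g\le c^2\}$ contains one of the foci. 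Let $K$ be the component containing $q^\ast$. Then $K$ contains $x$ or $y$, a point of $D$, and $K\cap\partial D=\emptyset$ because $\partial D\subseteq\{g>c^2\}$; a connected set that meets $D$ and avoids $\partial D$ must lie in $D$, so $K\subseteq D\subseteq B^n(z_0,r)$, contradicting $q^\ast\in K\cap\partial B^n(z_0,r)$. Hence $\inf_{\partial D}g\le c^2$ and the comparison follows. (Alternatively one may cite the ball-inclusion analysis of \cite{IMS}.) Chaining the three steps yields the first inequality; equality holds exactly when $D$ is the ball $B^n(z,r)$ centered at $z=(x+y)/2$, for then the monotonicity step is an identity and a symmetric computation gives $\inf_{p\in\partial D}g(p)=r^2-|x-y|^2/4$.

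The second inequality is then immediate from Theorem~\ref{Jung-appl}. Substituting $s=|x-y|/r\in(0,2]$ turns the middle term into $\log\bigl(1+2s/\sqrt{4-s^2}\bigr)$, which is precisely the expression whose ratio to $s$ was minimized in Theorem~\ref{Jung-appl}; that minimization showed $\tfrac1s\log\bigl(1+2s/\sqrt{4-s^2}\bigr)\ge c$ on $(0,2)$. Multiplying by $s$ gives $\log\bigl(1+2s/\sqrt{4-s^2}\bigr)\ge cs=c\,|x-y|/r$, which completes the proof.
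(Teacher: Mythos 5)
Your proposal is correct and follows essentially the same route as the paper: Jung's theorem to enclose $D$ in a ball of radius $r$, domain monotonicity of $\tilde{\tau}$ to pass to that ball, and then the (rescaled) unit-ball estimate of Theorem~\ref{Jung-appl}. The only difference is that you supply an explicit proof of the monotonicity step via the structure of the Cassinian sublevel sets, whereas the paper simply invokes the known monotone property of $\tilde{\tau}_D$; your argument for that step is sound.
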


\begin{proof}
Given that $D\subsetneq \Rn$ is an arbitrary domain with ${\rm diam}\, D<\infty$. 
By the well-known Jung's theorem \cite[Theorem~11.5.8]{Ber87},
there exists $z\in \Rn$ with $D\subset B(z,r)$, where $r=\sqrt{n/(2n+1)}\,{\rm diam}\, D$. Then by the monotone property of $\tilde{\tau}_D$ we have
$$\tilde{\tau}_D(x,y)\ge \tilde{\tau}_{B(z,r)} (x,y).
$$
Without loss of generality assume that $z=0$. Choose $u,v\in B(0,r)$ in such a way that
$|u-v|=2|u|=|x-y|$. Then by Theorem~\ref{Jung-appl}, we have
$$\tilde{\tau}_D(x,y)\ge \tilde{\tau}_{B(z,r)} (x,y)\ge \tilde{\tau}_B(-u,u)=\log\left(1+\frac{2|x-y|}{\sqrt{4r^2-|x-y|^2}}\right).
$$
This completes the proof of our theorem.
\end{proof}

Now, we intend to obtain the modulus of continuity of the identity map:
$$ id:\, (\Bn,u_{\Bn})\to (\Bn,|\cdot|\,).
$$
\begin{theorem}\label{unif-u}
If $x,y\in \Bn$ are arbitrary and $w=|x-y|\,e_1/2$, then
$$u_{\Bn}(x,y)\ge u_{\Bn}(-w,w)=2\log\left(\frac{2+|x-y|}{2-|x-y|}\right)\ge |x-y|,
$$
where the first inequality becomes equality when $y=-x$.
\end{theorem}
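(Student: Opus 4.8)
The plan is to compute $u_{\Bn}$ explicitly and reduce the statement to elementary one-variable estimates, in close parallel with the proof of Theorem~\ref{Jung-appl}. Since $\mathrm{dist}(x,\partial\Bn)=1-|x|$ for $x\in\Bn$, the defining formula reads
$$u_{\Bn}(x,y)=2\log\frac{|x-y|+\max\{1-|x|,1-|y|\}}{\sqrt{(1-|x|)(1-|y|)}}.$$
Writing $t=|x-y|$, the point $w=t\,e_1/2$ satisfies $|w|=t/2$ and $|{-w}-w|=t$, so $1-|{-w}|=1-|w|=1-t/2$, and a direct substitution gives the middle equality $u_{\Bn}(-w,w)=2\log\frac{1+t/2}{1-t/2}=2\log\frac{2+t}{2-t}$. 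This also settles the equality claim: if $y=-x$ then $|x|=|y|=t/2$, which is exactly the configuration realizing $u_{\Bn}(-w,w)$.

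For the first inequality I would fix $t=|x-y|$ and, assuming without loss of generality $|x|\le|y|$, set $a=|x|$, $b=|y|$, so that $\max\{1-|x|,1-|y|\}=1-a$ and the task becomes minimizing
$$F(a,b)=\frac{t+1-a}{\sqrt{(1-a)(1-b)}}$$
over the feasible set $0\le a\le b<1$ with $b-a\le t\le a+b$ (the triangle-inequality constraints that force $x,y\in\Bn$ with $|x-y|=t$). The key reduction is that $F$ depends on $b$ only through $1/\sqrt{1-b}$, hence is increasing in $b$; it therefore suffices to take $b$ at its smallest admissible value $b_{\min}=\max\{a,t-a\}$. This splits into the case $a\ge t/2$, where $b_{\min}=a$ and $F(a,a)=1+t/(1-a)$ is manifestly increasing in $a$ with minimum $\frac{2+t}{2-t}$ attained at $a=t/2$, and the case $a<t/2$, where $b_{\min}=t-a$ and one checks that $g(a):=F(a,t-a)^2$ is decreasing on the relevant interval, again attaining its infimum $\big(\frac{2+t}{2-t}\big)^2$ at $a=t/2$. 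In both cases the minimizer is the symmetric diametric pair $a=b=t/2$, yielding $u_{\Bn}(x,y)\ge 2\log\frac{2+t}{2-t}$.

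For the final inequality I would show that $h(t):=2\log\frac{2+t}{2-t}-t\ge0$ on $[0,2)$: one has $h(0)=0$ and $h'(t)=\frac{8}{4-t^2}-1=\frac{4+t^2}{4-t^2}>0$, so $h$ is increasing and hence nonnegative. Since $|x-y|<2$ for any $x,y\in\Bn$, this applies verbatim and completes the chain of inequalities.

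The main obstacle is the monotonicity of $g$ in the case $a<t/2$. Writing the logarithmic derivative $g'/g=-\frac{2}{t+1-a}+\frac{1}{1-a}-\frac{1}{1-t+a}$, one must verify $g'<0$ throughout $\max\{0,t-1\}\le a\le t/2$; clearing denominators reduces this to the inequality $a<\frac{t^2-t+2}{t+2}$, which follows from $a\le t/2$ together with the identity $\frac{t}{2}(t+2)=t^2-t+2-\frac12(2-t)^2\le t^2-t+2$. Everything else is routine once the explicit formula and the reduction to $b=b_{\min}$ are in place.
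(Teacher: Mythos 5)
Your proof is correct, and it takes a genuinely different route from the paper's. The paper argues geometrically: its Case I treats $x$, $0$, $y$ collinear, splitting into the subcases $0\in[x,y]$ and $x\in[0,y]$ and using the AM--GM estimate $1/\sqrt{(1-|x|)(1-|y|)}\ge 2/(2-|x|-|y|)$ together with $|x-y|=|x|+|y|$ (resp.\ $|y|-|x|$); its Case II then reduces arbitrary $x,y$ to the collinear case by replacing $y$ with a collinear $y'$ satisfying $|x-y'|=|x-y|$ and $|y'|\le|y|$, asserting $u_{\Bn}(x,y)\ge u_{\Bn}(x,y')$ ``from the definition.'' You instead exploit the fact that $u_{\Bn}(x,y)$ depends only on the triple $(|x|,|y|,|x-y|)$ and minimize $F(a,b)=(t+1-a)/\sqrt{(1-a)(1-b)}$ directly over the exact feasible region cut out by the triangle inequality, first pushing $b$ down to $\max\{a,t-a\}$ by monotonicity and then running a one-variable derivative test in $a$; the minimizer $a=b=t/2$ is precisely the symmetric pair $\pm w$. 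What your approach buys is rigor exactly where the paper is thinnest: the monotonicity claim in the paper's Case II is delicate because when $|y'|<|x|$ the maximum in the numerator switches from $1-|x|$ to $1-|y'|$, and the asserted intermediate inequality can actually fail there (for $|x|=|y|=|x-y|=1/2$ the construction forces $y'=0$, and $u_{\Bn}(x,y)=2\log 2$ while $u_{\Bn}(x,0)=2\log(3/\sqrt{2})>2\log 2$), whereas your global minimization never needs such a comparison and still lands on the correct bound. Two minor remarks: your verification that $g'<0$ for $a<t/2$ can be shortened, since $1-a>1-t+a$ already makes $\frac{1}{1-a}-\frac{1}{1-t+a}$ negative and the remaining term $-\frac{2}{t+1-a}$ only helps, so the inequality $a<(t^2-t+2)/(t+2)$ is correct but unnecessary; and your final step via the increasing function $h(t)=2\log\frac{2+t}{2-t}-t$ on $[0,2)$ coincides with the paper's argument using the function it calls $f$.
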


\begin{proof}
Let $x,y\in \Bn$ with $|x|\le |y|$. We consider two cases.

\noindent{\bf Case I.} Suppose that $x$ and $y$ are lying on a diameter of $\Bn$. We have two possibilities. 
If $0\in [x,y]$, then it follows by the assumption that ${\rm dist}\,(x,\partial \Bn)=1-|x|\ge 1-|y|={\rm dist}\,(y,\partial \Bn)$. Then
$$u_{\Bn}(x,y)=2\log\left(\frac{|x-y|+1-|x|}{\sqrt{(1-|x|)(1-|y|)}}\right)$$
and hence
$$
u_{\Bn}(-w,w)=2\log\left(\frac{1+|w|}{1-|w|}\right)=2\log\left(\frac{2+|x-y|}{2-|x-y|}\right).
$$
By the AM-GM inequality we have
\begin{equation}\label{AM-GM}
\frac{1}{\sqrt{(1-|x|)(1-|y|)}}\ge \frac{2}{2-|x|-|y|}.
\end{equation}
To prove our claim, it is enough to show
$$\frac{|x-y|+1-|x|}{\sqrt{(1-|x|)(1-|y|)}}\ge \frac{2+|x-y|}{2-|x-y|}.
$$
Since $|x-y|=|x|+|y|$ and $|x|\le |y|$, we have
$$\frac{|x-y|+1-|x|}{\sqrt{(1-|x|)(1-|y|)}}\ge \frac{2(1+|y|)}{2-|x|-|y|}\ge \frac{2+|x|+|y|}{2-|x|-|y|}
=\frac{2+|x-y|}{2-|x-y|},
$$
where the first inequality follows from \eqref{AM-GM}.

If $x\in [0,y]$, we have ${\rm dist}\,(x,\partial \Bn)=1-|x|\ge 1-|y|={\rm dist}\,(y,\partial \Bn)$ and $|x-y|=|y|-|x|$.
Then by the definition of $u$-metric, we obtain
$$u_{\Bn}(x,y)=2\log\left(\frac{1+|y|-2|x|}{\sqrt{(1-|x|)(1-|y|)}}\right)
$$
and hence
$$
u_{\Bn}(-w,w)=2\log\left(\frac{1+|w|}{1-|w|}\right)=2\log\left(\frac{2+|y|-|x|}{2-|y|+|x|}\right).
$$
To show $u_{\Bn}(x,y)\ge u_{\Bn}(-w,w)$, it is enough to show that
$$\frac{1+|y|-2|x|}{\sqrt{(1-|x|)(1-|y|)}}\ge \frac{2+|y|-|x|}{2-|y|+|x|}.
$$
From \eqref{AM-GM}, we have
$$\frac{1+|y|-2|x|}{\sqrt{(1-|x|)(1-|y|)}}\ge \frac{2(1+|y|-2|x|)}{2-|x|-|y|}.
$$
Now our aim is to show
$$\frac{2(1+|y|-2|x|)}{2-|x|-|y|}\ge \frac{2+|y|-|x|}{2-|y|+|x|},
$$
or, equivalently,
$$2|y|-2|x|-|y|^2+6|x||y|-5|x|^2\ge 0.
$$
Since, $|x|\le |y|$, we have
\begin{eqnarray*}
2|y|-2|x|-|y|^2+6|x||y|-5|x|^2 &\ge & 2|y|-2|x|-|y|^2+|x|^2\\
&=& (1-|x|)^2-(1-|y|)^2 \ge 0.
\end{eqnarray*}

Again, since the function
$$f(t)=2\log\left(\frac{2+t}{2-t}\right)-t
$$
is increasing in $t$, the conclusion follows.

\noindent{\bf Case II.} Given that $x,y\in \Bn$ are arbitrary. Choose $y'\in \Bn$ such that $|x-y|=|x-y'|$ and $x,0,\mbox{ and }y'$ 
are co-linear. By geometry, it is clear that $|y'|\le |y|$. Hence,
$$u_{\Bn}(x,y)\ge u_{\Bn}(x,y')\ge u_{\Bn}(-w,w),
$$
where the first inequality follows from the definition and the second inequality follows from {\bf Case I}.
The proof is complete.
\end{proof}

\begin{remark}
It is remarkable that the domain monotonicity property of the $\tilde{\tau}_D$-metric 
plays a crucial role in the proof of 
Theorem~$\ref{gen-unif}$. Since the $u_D$-metric does not satisfy domain monotonicity property, 
it is not easy to extend Theorem~$\ref{unif-u}$ to arbitrary bounded domains of $\Rn$
in a similar manner.
\end{remark}

\section{Quasi-invariance property of the $u_D$-metric and the $\tilde{\tau}_D$-metric under quasiconformal maps}

Quasiconformal mappings are natural generalizations of conformal mappings. 
There are several equivalent definitions of quasiconformal mappings in the literature. Therefore,
it is appropriate to saying that, in this paper, we adopt the metric definition of the quasiconformality introduced by V\"ais\"al\"a in \cite{Vai71}.
We also refer to \cite{AVV,Vuo88} for it's definition and further developments in quasiconformal theory.
Let $D\subsetneq \Rn$ be a domain and let $f:D\to f(D)\subsetneq \Rn$ be a homeomorphism. The function
$f$ is said to be $K$-quasiconformal ($1\le K<\infty$), if the linear dilatation of $f$ at $x\in D$, defined by
\begin{equation}\label{qcd}
H(f,x)=\limsup_{r\to 0} \frac{\sup\{|f(x)-f(y)|:|x-y|=r\}}{\inf\{|f(x)-f(y)|:|x-y|=r\}},
\end{equation}
is bounded by $K$.
Moreover, if $f$ is $K$-quasiconformal then $\sup_{x\in D} H(f, x) \le c(n, K) < \infty$.

As an example of quasiconformal mapping, consider an $L$-bilipschitz map of $\mathbb{R}^n$, i.e., $f:\Rn \to \Rn$ satisfies
$$\frac{1}{L}|x-y|\le |f(x)-f(y)|\le L|x-y|, \quad x,y\in \Rn.
$$
Now, it is easy to verify from \eqref{qcd} that the $L$-bilipschitz map $f$ is $K$-quasiconformal with $K=L^2$. Further,
we notice that such an $L$-bilipschitz map $f$ is also $L^2$-bilipschitz with respect to the $\tilde{\tau}_D$-metric.
That is, 
$$\frac{1}{L^2}\tilde{\tau}_D(x,y)\le \tilde{\tau}_{f(D)}(f(x),f(y))\le L^2 \tilde{\tau}_D(x,y),\quad
x,y\in D.
$$
Indeed, 
\begin{eqnarray*}
\tilde{\tau}_{f(D)}(f(x),f(y)) &=& \log\left(1+\sup_{f(p)\in \partial f(D)}\frac{|f(x)-f(y)|}{\sqrt{|f(x)-f(p)||f(p)-f(y)|}}\right)\\
&\le & \log\left(1+\sup_{p\in \partial D}\frac{L^2|x-y|}{\sqrt{|x-p||p-y|}}\right)\\
&\le & L^2\,\log\left(1+\sup_{p\in \partial D}\frac{|x-y|}{\sqrt{|x-p||p-y|}}\right)=L^2\,\tilde{\tau}_D(x,y),
\end{eqnarray*}
where the first inequality follows from the bilipschitz condition on $f$ and the second inequality follows
from the well known Bernoulli's inequality \eqref{Bernoulli-a>1}. Observe that the distortion constant
$L^2$ is independent of the dimension of the space.
Now the question arises whether we can distort the $\tilde{\tau}_D$-metric if we replace the $L$-bilipschitz map by any arbitrary $K$-quasiconformal map?
The answer is ``yes" and the distortion constant will depend upon $n$ (the dimension of the space) and $K$. Briefly, our problem is the following:

{\em For $n\ge 1$ and $K\ge 1$, does there exist a constant $c$ depending only on $n$ and $K$ with the following property: 
if $f$ is a $K$-quasiconformal map of $(D,d)$ onto $(D',d')$, then
$$d'_{D'}(f(x),f(y))\le c \max\{d_D(x,y),d_D(x,y)^\alpha\}, \quad \alpha=K^{1/(1-n)},
$$
for all $x,y\in D$.
}

This problem is studied in different context  for different metrics. One of them is to see whether
$c\to 1$ as $K\to 1$. 
Obtaining a distortion constant (which tend to $1$ as $K\to 1$) under a $K$-quasiconformal map of $\Rn$ for hyperbolic-type metrics in general is a challenging problem in geometric function theory. Hence, the existance of distortion constant $c$ without knowing its limit is also investigated by researchers.
In this regard, Gehring and Osgood first proved the quasi-invariance property of the quasihyperbolic metric under quasiconformal maps of $\Rn$ in \cite[Theorem~3]{GO79}.
Here, the distortion constant doesn't tend to $1$ as $K\to 1$. 
However, the distortion constant for the Seittenranta metric~\cite[Theorem~1.2]{Sei99}, and hence for the hyperbolic metric \cite[Corollary~2.10]{KVZ14} approaches $1$ as $K\to 1$. Hence we are interested to study the quasi-invariance property of the $\tilde{\tau}_D$-metric followed by the quasi-invariance property of the $u_{D}$-metric under  quasiconformal maps of $\Rn$. In the proof we need the quasi-invariance property of 
the distance ratio metric. 
Recall that the distance ratio metric, $\tilde{j}_D$, is defined by
$$\tilde{j}_D(x,y)=\log\left(1+\frac{|x-y|}{\min\{{\rm dist}(x,\partial D),{\rm dist}(y,\partial D)\}}\right),\quad x,y\in D.
$$

\begin{theorem}\label{qinv-tau}
For $n\ge 1$ and $K\ge 1$,
if $f:\Rn\to \Rn$ is a $K$-quasiconformal mapping which maps $D\subsetneq \Rn$ onto $D'\subsetneq \Rn$ then there exists a constant
$C_1$ depending only on $n$ and $K$ such that
$$\tilde{\tau}_{D'}(f(x),f(y))\le C_1\max\{\tilde{\tau}_D(x,y),\tilde{\tau}_D(x,y)^\alpha\}
$$
for all $x,y\in D$, where $\alpha=K^{1/(1-n)}$.
\end{theorem}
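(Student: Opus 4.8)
The plan is to factor the problem through the distance ratio metric $\tilde{j}_D$ defined just above the statement, using two ingredients: an elementary two-sided comparison between $\tilde{\tau}_D$ and $\tilde{j}_D$ valid in every proper subdomain, and the quasi-invariance of $\tilde{j}_D$ under $K$-quasiconformal self-maps of $\Rn$ (the property announced in the discussion preceding the theorem, which carries exactly the Gehring--Osgood exponent $\alpha=K^{1/(1-n)}$).

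First I would record the comparison $\tfrac12\,\tilde{j}_D(x,y)\le \tilde{\tau}_D(x,y)\le \tilde{j}_D(x,y)$, holding for all $x,y$ in any $D\subsetneq\Rn$. The upper bound is immediate: for every $p\in\partial D$ we have $|x-p|\ge {\rm dist}(x,\partial D)$ and $|p-y|\ge {\rm dist}(y,\partial D)$, hence $\sqrt{|x-p||p-y|}\ge \sqrt{{\rm dist}(x,\partial D)\,{\rm dist}(y,\partial D)}\ge \min\{{\rm dist}(x,\partial D),{\rm dist}(y,\partial D)\}$, and taking the supremum over $p$ followed by $\log(1+\cdot)$ gives $\tilde{\tau}_D\le \tilde{j}_D$. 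For the lower bound, assume without loss of generality that ${\rm dist}(x,\partial D)\le {\rm dist}(y,\partial D)$, write $m={\rm dist}(x,\partial D)$, and choose a nearest boundary point $p_0$ with $|x-p_0|=m$; then $|p_0-y|\le m+|x-y|$, so evaluating the supremum at $p_0$ and setting $s=|x-y|/m$ yields $\sup_{p\in\partial D}\frac{|x-y|}{\sqrt{|x-p||p-y|}}\ge \frac{s}{\sqrt{1+s}}$. A short computation shows $1+s\le\bigl(1+\frac{s}{\sqrt{1+s}}\bigr)^{2}$ for all $s\ge0$, which is precisely $\tilde{j}_D(x,y)\le 2\,\tilde{\tau}_D(x,y)$.

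With these at hand the proof chains three inequalities: the upper comparison in the image domain $D'$, the quasi-invariance of the distance ratio metric, and the lower comparison in the source domain $D$. Writing $c=c(n,K)$ for the $\tilde{j}$-distortion constant, I would obtain
$$\tilde{\tau}_{D'}(f(x),f(y))\le \tilde{j}_{D'}(f(x),f(y))\le c\max\{\tilde{j}_D(x,y),\tilde{j}_D(x,y)^{\alpha}\}\le c\max\{2\tilde{\tau}_D(x,y),(2\tilde{\tau}_D(x,y))^{\alpha}\}.$$
Since $0<\alpha\le 1$ we have $2^{\alpha}\le 2$, so the last maximum is at most $2\max\{\tilde{\tau}_D(x,y),\tilde{\tau}_D(x,y)^{\alpha}\}$; setting $C_1=2c$ finishes the argument.

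The main obstacle is the middle inequality, the quasi-invariance $\tilde{j}_{D'}(f(x),f(y))\le c\max\{\tilde{j}_D(x,y),\tilde{j}_D(x,y)^{\alpha}\}$, rather than the comparison, which is elementary. Here it is essential that $f$ is a global $K$-quasiconformal map of $\Rn$: one cannot simply pass through the Gehring--Osgood quasihyperbolic estimate, since $k_D$ dominates $\tilde{j}_D$ but is not comparable to it on non-uniform domains. Instead the distortion of $\tilde{j}$ rests on the global quasisymmetry of $f$, which controls ${\rm dist}(f(x),\partial D')$ against the image of a nearest boundary point of $x$ (using $\partial D'=f(\partial D)$), and the Hölder bound of Gehring--Osgood then produces the exponent $\alpha=K^{1/(1-n)}$. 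As in the quasihyperbolic case noted in the introduction, the resulting constant $C_1=2c$ does not tend to $1$ as $K\to 1$.
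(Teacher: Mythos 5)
Your proof is correct and follows essentially the same route as the paper: both factor through the distance ratio metric $\tilde{j}$, invoke its quasi-invariance under $K$-quasiconformal maps of $\Rn$ as the key external ingredient (the paper cites \cite[Lemma~2.3]{HKSV} for this), and convert back via the two-sided comparison $\tfrac12\,\tilde{j}_D\le\tilde{\tau}_D\le\tilde{j}_D$ before absorbing the factor $2$ into the constant. The only difference is that you prove this comparison from scratch, whereas the paper simply cites \cite[Theorems~4.2, 4.3]{Ibr16} for it.
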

\begin{proof}
For all $x,y\in D$, we have
\begin{eqnarray*}
\tilde{\tau}_{D'}(f(x),f(y)) \le  \tilde{j}_D(f(x),f(y)) &\le & C\max\{\tilde{j}_D(x,y),\tilde{j}_D(x,y)^\alpha\}\\
&\le &C \max\{2\tilde{\tau}_D(x,y),2^\alpha \tilde{\tau}_D(x,y)^\alpha\}\\
&\le &C_1 \max\{\tilde{\tau}_D(x,y),\tilde{\tau}_D(x,y)^\alpha\},
\end{eqnarray*}
where the first and third inequality follows from \cite[Theorem~4.2, 4.3]{Ibr16}, the second inequality follows from 
\cite[Lemma~2.3]{HKSV}, and the constant $C_1$ is depending upon $n$ and $K$. This completes the proof.
\end{proof}

The following quasi-invariance property holds true for the $u_D$-metric under quasiconformal mappings of $\Rn$.

\begin{theorem}
For $n\ge 1$ and $K\ge 1$,
if $f:\Rn\to \Rn$ is a $K$-quasiconformal mapping which maps $D\subsetneq \Rn$ onto $D'\subsetneq \Rn$ then there exists a constant
$C_2$ depending only on $n$ and $K$ such that
$$u_{D'}(f(x),f(y))\le C_2 \max\{u_D(x,y),u_D(x,y)^\alpha\}
$$
for all $x,y\in D$, where $\alpha=K^{1/(1-n)}$.
\end{theorem}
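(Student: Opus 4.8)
The plan is to reduce this statement to Theorem~\ref{qinv-tau} by sandwiching the $u_D$-metric between constant multiples of the $\tilde{\tau}_D$-metric. By \cite[Theorem~3.5]{IMS}, the metrics $u_D$ and $\tilde{\tau}_D$ are bilipschitz equivalent on every domain $D\subsetneq\Rn$ with a bilipschitz constant $L\ge 1$ that is independent of the domain; that is,
$$\frac{1}{L}\,\tilde{\tau}_D(x,y)\le u_D(x,y)\le L\,\tilde{\tau}_D(x,y)\qquad\text{for all }x,y\in D,$$
and the analogous chain holds on $D'$. The quasi-invariance of $\tilde{\tau}$ is already in hand from Theorem~\ref{qinv-tau}, so the whole argument is a change of metric on the two ends.

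First I would apply the upper half of the equivalence on $D'$ to pass from $u_{D'}$ to $\tilde{\tau}_{D'}$, obtaining $u_{D'}(f(x),f(y))\le L\,\tilde{\tau}_{D'}(f(x),f(y))$. Next I would invoke Theorem~\ref{qinv-tau}, which supplies a constant $C_1=C_1(n,K)$ with
$$\tilde{\tau}_{D'}(f(x),f(y))\le C_1\max\{\tilde{\tau}_D(x,y),\tilde{\tau}_D(x,y)^\alpha\}.$$
Finally I would use the lower half of the equivalence on $D$ in the form $\tilde{\tau}_D(x,y)\le L\,u_D(x,y)$ to replace each occurrence of $\tilde{\tau}_D$ inside the maximum by $u_D$, namely $\tilde{\tau}_D(x,y)\le L\,u_D(x,y)$ and $\tilde{\tau}_D(x,y)^\alpha\le L^{\alpha}u_D(x,y)^{\alpha}$. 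Combining the three displays and collecting constants gives the claim with $C_2=L^2C_1$, which depends only on $n$ and $K$.

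There is no real obstacle beyond bookkeeping; the one point that needs care is the interaction of the constants with the maximum and the exponent $\alpha$. Since $K\ge 1$ and $n\ge 2$ give $\alpha=K^{1/(1-n)}\le 1$, and since $L\ge 1$, one has $\max\{L,L^{\alpha}\}=L$, so the factor $L$ can be pulled outside the maximum without disturbing the exponent; this is exactly what keeps the right-hand side of the form $\max\{u_D,u_D^{\alpha}\}$ rather than producing spurious mixed terms. For the degenerate case $n=1$ the exponent $\alpha$ is not defined, but $K$-quasiconformality forces $f$ to be bilipschitz on the line, and the simpler bilipschitz distortion estimate recorded earlier applies directly.
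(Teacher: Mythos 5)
Your proposal is correct and follows essentially the same route as the paper: the paper likewise invokes the bilipschitz equivalence $2\tilde{\tau}_D(x,y)\le u_D(x,y)\le 4\tilde{\tau}_D(x,y)$ from \cite{IMS} and then appeals to Theorem~\ref{qinv-tau}, exactly as you do with your generic constant $L$. Your extra care about pulling $L$ through the maximum and the degenerate case $n=1$ is sound bookkeeping that the paper leaves implicit.
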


\begin{proof}
Recall that the $u_D$-metric and the $\tilde{\tau}_D$-metric are bilipschitz equivalent.
Indeed,
$$2 \tilde{\tau}_D(x,y) \le u_D(x,y)\le 4\tilde{\tau}_D(x,y).
$$
(see, \cite[Theorem~4.5]{IMS}). 
The proof now follows from Theorem~\ref{qinv-tau}.
\end{proof}

\bigskip
\noindent
{\bf Acknowledgement.} The authors wish to thank the referees for their valuable comments.


\begin{thebibliography}{99}
\bibitem{AVV}  {G. D. Anderson, M. K. Vamanamurthy, and M. K.
Vuorinen,}
{\em Conformal Invariants, Inequalities, and Quasiconformal Maps},
John Wiley \& Sons, Inc., 1997.

\bibitem{Bea95} {A. F. Beardon},
{\em Geometry of discrete groups},
Springer-Verlag, New York, 1995.

\bibitem{Bea98}  {A. F. Beardon},
{\em The Apollonian metric of a domain in $\mathbb{R}^n$}. In:
P. Duren, J. Heinonen, B. Osgood, and B. Palka (Eds.)
\textit{Quasiconformal mappings and analysis} (Ann Arbor, MI, 1995),
pp. 91--108.
Springer-Verlag, New York, 1998.

\bibitem{Ber87} {M. Berger},
{\em Geometry I,}
Springer-Verlag, Berlin, 1987.

\bibitem{Dur83} {P. L. Duren},
{\em Univalent functions}, Springer-Verlag, Heidelberg, 1983.

\bibitem{GH00}
F.W. Gehring and K. Hag,
{\em The Apollonian metric and quasiconformal mappings,}
Contemp. Math., {\bf 256} (2000), 143--163.

\bibitem{GO79} {F. W. Gehring and B. G. Osgood,}
{\em Uniform domains and the quasihyperbolic metric},
{J. Analyse Math.,} {\bf 36} (1979), 50--74.

\bibitem{GP76} {F. W. Gehring and B. P. Palka,}
{\em Quasiconformally homogeneous domains},
{J. Analyse Math.,} {\bf 30} (1976), 172--199.

\bibitem{GK03}
I. Graham and G. Kohr, {\em Topics in geometric function theory in one and higher dimensions,}
Marcel Dekker Inc., New York, 2003.

\bibitem{Gro87} {Gromov M.} (1987), 
{\em Hyperbolic groups}, Essays in Group Theory, 75--263,
Math. Sci. Res. Inst. Publ., {\bf 8}, Springer, New York.

\bibitem{Has03} {P. H\"ast\"o,}
{\em The Apollonian metric: uniformity and quasiconvexity,}
{Ann. Acad. Sci. Fenn. Math.}, {\bf 28} (2) (2003), 385--414.

\bibitem{Has05} {P. H\"ast\"o,}
{\em Gromov hyperbolicity of the $j_G$ and $\tilde{j}_G$ metrics,}
Proc. Amer. Math. Soc., {\bf 134} (4) (2005), 1137--1142.

\bibitem{HKSV} {P. H\"ast\"o, R. Kl\'en, S. K. Sahoo, and M. Vuorinen},
{\em Geometric properties of $\varphi$-uniform domains},
J. Anal., {\bf 24} (1) (2016), 57--66.

\bibitem{Hay89}  {W. K. Hayman,} {\em Subharmonic functions. Vol. 2}, London
Mathematical Society Monographs, 20, Academic Press, Inc. [Harcourt
Brace Jovanovich, Publishers], London, 1989, pp. i--xxvi and 285--875.

\bibitem{Ibr03}{Z. Ibragimov,}
{\em On the Apollonian metric of domains in $\overline{\mathbb{R}^n}$}.
Complex Var. Theory Appl., {\bf 48} (10) (2003), 837--855.

\bibitem{Ibr09}{Z. Ibragimov,}
{\em The Cassinian metric of a domain in $\bar{\mathbb{R}}^n$},
Uzbek. Mat. Zh., (2009), {1}, 53--67.

\bibitem{Ibr11}{Z. Ibragimov,}
{\em Hyperbolizing metric spaces,}
Proc. Amer. Math. Soc., {\bf 139} (12) (2011), 4401--4407.

\bibitem{Ibr16}{Z. Ibragimov,}
{\em A scale-invariant Cassinian metric},
J. Anal., {\bf 24} (1) (2016), 111--129.

\bibitem{IMSZ}{Z. Ibragimov, M. R. Mohapatra, S. K. Sahoo, and X.-H. Zhang,}
{\em Geometry of the Cassinian metric and its inner metric},
Bull. Malays. Math. Sci. Soc., {\bf 40} (1) (2017), 361--372. 

\bibitem{KLV13} {R. Kl\'en, L. Li, and M. Vuorinen,}
{\em Subdomain geometry of hyperbolic type metrics},
Transactions of the Institute of Mathematics of the National Academy of Sciences of Ukraine, {\bf 10} (4-5) (2013), 190--206.

\bibitem{KMS} {R. Kl\'en, M. R. Mohapatra, and S. K. Sahoo},
\emph{Geometric properties of the Cassinian metric},
Math. Nachr., (2016), {\tt DOI: 10.1002/mana.201600117}.

\bibitem{KVZ14} {R. Kl\'en, M. Vuorinen, and X.-H. Zhang},
{\em Quasihyperbolic metric and M\"obius transformations},
Proc. Amer. Math. Soc., {\bf 142} (1) (2014), 311--322.

\bibitem{KL98} K. Kim and N. Langmeyer,
{\em Harmonic measure and hyperbolic distance in John disks},
Math. Scand., {\bf 83} (1998), 283--299.

\bibitem{Kumaresan-book} {S. Kumaresan,}
{\em Topology of metric spaces}, Second edition,
Alpha Science International, Oxford, UK, 2011.

\bibitem{LV73} {O. Lehto and K. I. Virtanen,}
{\em Quasiconformal mappings in the plane,}
Springer-Verlag, New York, 1973.

\bibitem{IMS}{M. R. Mohapatra, and S. K. Sahoo,}
{\em A Gromov hyperbolic metric vs the hyperbolic and other related metrics},
Preprint. ({\tt https://arxiv.org/abs/1705.08574})

\bibitem{Rudin-book} {W. Rudin,}
{\em Principles of Mathematical Analysis,}
Third edition, McGraw Hill, USA, 1976.

\bibitem{Sei99}  {P. Seittenranta,}
{\em  M\"obius-invariant metrics},
Math. Proc. Cambridge Philos. Soc., {\bf 125} (1999), 511--533.

\bibitem{Vai71} {J. V\"ais\"al\"a},
{\em Lectures on $n$-dimensional quasiconformal mappings},
Springer-Verlag, Berlin, Heidelberg, New York, 1971. 

\bibitem{Vai05}
{J. V\"ais\"al\"a},
{\em Gromov hyperbolic spaces}, 
Expo. Math., {\bf 23} (2005), 187--231.

\bibitem{Vuo85} {M. Vuorinen,}
{\em Conformal invariants and quasiregular mappings},
J. Anal. Math., {\bf 45} (1985), 69--115.

\bibitem{Vuo88} {M. Vuorinen,}
{\em Conformal geometry and quasiregular mappings},
Lecture notes in mathematics, Springer-Verlag,
Berlin, 1988.

\bibitem{Vu07}  {M. Vuorinen,}
{\em Metrics and quasiregular mappings}, In: S. Ponnusamy, T.
Sugawa, and M. Vuorinen (Eds.) \textit{Quasiconformal Mappings
and their Applications} (New Delhi, India, 2007), Narosa Publishing House, 
291--325.

\end{thebibliography}
\end{document}